\newtheorem{thm}{Theorem}[section]
\newtheorem{lem}{Lemma}[section]
\theoremstyle{definition}
\newtheorem{example}{Example}
\theoremstyle{remark}
\numberwithin{equation}{section}
\begin{document}

\title[The converse of Baer's theorem ]{ The converse of Baer's theorem }

\author{Asadollah Faramarzi Salles}

\address{School of Mathematics and Computer Science, Damghan University, Damghan, Iran}

\email{faramarzi@du.ac.ir}%

%\thanks{}%
\subjclass{Primary 20B05; Secondary 20D15.}%
\keywords{Baer's theorem, lower central series, upper central series}%

%\date{}%
%\dedicatory{}%
%\commby{}%
% ----------------------------------------------------------------
\begin{abstract}
The Baer theorem states that for a group $G$ finiteness of $G/Z_i(G)$ implies finiteness of
$\gamma_{i+1}(G)$. In this paper we show that if $G/Z(G)$ is finitely generated then the converse is
true.
\end{abstract}

\maketitle
% ----------------------------------------------------------------
\section{Introduction}
A basic theorem of Schur (see  \cite[10.1.4]{rob}) assert that if the center of a group $G$ has finite
index, then the derived subgroup of $G$ is finite. This raises various questions: is there a generalization
to higher terms of the upper and lower central series? Is there a converse? There has been attempts to modify the statement and get conclusions. Some authors studied the situation under some extra conditions on the group. For example B. H. Neumann
\cite{neu} proved that $G/Z(G)$ is finite if $\gamma_2(G)$ is finite and $G$ is finitely generated.
This result is recently generalized by P. Niroomand \cite{nir} by proving that $G/Z(G)$ is finite if
$\gamma_2(G)$ is finite and $G/Z(G)$ is finitely generated. For generalizing to higher terms of the upper
and lower central series, R. Baer (see for example \cite[14.5.1]{rob}) has proved that, if $G/Z_i(G)$ is finite, then $\gamma_{i+1}(G)$ is finite. P. Hall (see for example \cite[14.5.3]{rob}) has proved a partial converse of Baer's theorem,that is, if $\gamma_{i+1}(G)$ is finite, then $G/Z_{2i}(G)$ is finite. In this paper we will prove that a converse of Baer's theorem when $G/Z(G)$ is finitely generated.%%*******************************************************************************************************************
\section{Results}
%%*******************************************************************************************************************
\begin{thm}\label{thm1}
Let $G$ be a finitely generated group. $\gamma
_{i+1}(G)$ is finite if and only if ${G}/{Z_{i}(G)}$ is
finite.
\end{thm}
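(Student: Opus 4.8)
The forward implication is exactly Baer's theorem \cite[14.5.1]{rob} and needs no finite generation, so the whole content lies in the converse: assuming $G$ finitely generated and $\gamma_{i+1}(G)$ finite, I must produce finiteness of $G/Z_i(G)$. The plan is to argue by induction on $i$, the base case $i=1$ being precisely the theorem of B.~H. Neumann \cite{neu} that a finitely generated group with finite commutator subgroup has finite central quotient.

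For the inductive step I want to pass to $\bar G := G/Z(G)$, which is again finitely generated, and invoke the case $i-1$. The decisive point—and what I expect to be the main obstacle—is to show that \emph{finiteness of $\gamma_{i+1}(G)$ descends to finiteness of $\gamma_i(\bar G)$}, i.e. that $\gamma_i(G)/(\gamma_i(G)\cap Z(G))$ is finite. This is where finite generation must be used in an essential way; it genuinely fails without it (an infinite central product of copies of the dihedral group of order $8$ has $\gamma_2$ of order $2$ yet an infinite central quotient), so no purely nilpotent-structure argument can suffice. I would prove it as follows. Write $D:=\gamma_i(G)$, so that $[D,G]=\gamma_{i+1}(G)$ is finite and normal, and fix generators $g_1,\dots,g_n$ of $G$. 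Since $\gamma_{i+1}(G)$ is finite, the centralizer $D_0:=C_D(\gamma_{i+1}(G))$ has finite index in $D$. On $D_0$ the map $\Theta\colon d\mapsto ([d,g_1],\dots,[d,g_n])$ into the finite group $\gamma_{i+1}(G)^n$ is a homomorphism: for $d,d'\in D_0$ the identity $[dd',g_j]=[d,g_j]^{d'}[d',g_j]$ collapses to $[d,g_j][d',g_j]$ because $d'$ centralizes $\gamma_{i+1}(G)\ni [d,g_j]$. Its kernel is $D_0\cap Z(G)$, whence $D_0/(D_0\cap Z(G))$ is finite, and since $D\cap Z(G)\le D_0$ this yields finiteness of $D/(D\cap Z(G))$, as required.

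With this lemma in hand the induction closes cleanly. As $\bar G=G/Z(G)$ is finitely generated and $\gamma_i(\bar G)$ is finite, the case $i-1$ gives that $\bar G/Z_{i-1}(\bar G)$ is finite. Finally the standard identity $Z_{i-1}(G/Z(G))=Z_i(G)/Z(G)$ for the upper central series identifies $\bar G/Z_{i-1}(\bar G)$ with $G/Z_i(G)$, completing the step. The only delicate ingredient is the descent lemma above; the remaining manipulations—finite generation of quotients, the behaviour of the upper central series under factoring out the centre, and Neumann's base case—are routine.
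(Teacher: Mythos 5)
Your proposal is correct and follows essentially the same route as the paper: induction on $i$ with the decisive step being that finiteness of $\gamma_{i+1}(G)$ forces $\gamma_i(G)/(\gamma_i(G)\cap Z(G))$ to be finite, after which one passes to the finitely generated quotient $G/Z(G)$ and uses $Z_{i-1}(G/Z(G))=Z_i(G)/Z(G)$, with Baer's theorem giving the other direction. The only divergence is in how that step is verified: the paper observes that for each generator $g_j$ the conjugates $\{g_j^b : b\in\gamma_i(G)\}$ lie in the finite coset $g_j\gamma_{i+1}(G)$, so each $C_{\gamma_i(G)}(g_j)$ has finite index, and then intersects these finitely many centralizers to reach $\gamma_i(G)\cap Z(G)$, whereas you embed $C_{\gamma_i(G)}(\gamma_{i+1}(G))$ modulo its intersection with $Z(G)$ into $\gamma_{i+1}(G)^n$ via $d\mapsto([d,g_1],\ldots,[d,g_n])$ --- both are valid, yours being marginally more elaborate since the homomorphism property requires first restricting to the centralizer of $\gamma_{i+1}(G)$.
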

\begin{proof}
Let $a\in G$, since $\gamma_{i+1}(G)=[\gamma_i(G), G]$ is finite,
the set of conjugates $\{a^b: b\in \gamma_i(G)\}$ is finite, so
$C_{\gamma_i(G)}(a)$ has finite index in $\gamma_i(G)$. Since $G$
is finitely generated, $\dfrac{\gamma_i(G)}{(\gamma_i(G)\bigcap
Z(G))}$ is finite. Hence $\dfrac{(\gamma_i(G)
Z(G))}{Z(G)}=\gamma_i({G}/{Z(G)})$ is finite. So by induction
$\dfrac{({G}/{Z(G)})}{Z_{i-1}({G}/{Z(G)})}$ is finite, and
then ${G}/{Z_i(G)}$ is finite. Now 14.5.1 of \cite{rob} completes the proof.
\end{proof}
%%**************************************************************************************
\begin{lem}
Let $G$ be a  group and $G/Z(G)=\langle x_1Z(G), \ldots, x_nZ(G)\rangle$.
 Then $Z(G)=\displaystyle\bigcap_{i=1}^nC_G(x_i)$.
\end{lem}\label{lem1}
\begin{proof}
It is clear that, $Z(G)$ is a subset of $C_G(x)$, for any $x\in G$. 
Now let $a$ be an element of $G$ such that, $[a, x_i]=1$ for $i=1,\ldots, n$. 
For any $b\in G$, $b=y_1y_2\cdots y_tz$ 
where, $y_i\in \{x_1, \ldots, x_n\}$ and $z\in Z(G)$. Now we have 
$[a, b]=[a, y_1y_2\cdots y_tz]=1$, since $[a, y_i]=1=[a, z]$. 
Therefore $a$ is an element of $Z(G)$.
\end{proof}
%%%***********************************************************************************************
\begin{thm}\label{thm2}
 Let $G$ be a  group and $G/Z(G)$ finitely generated. Then $\gamma
_{i+1}(G)$ is finite if and only if ${G}/{Z_{i}(G)}$ is
finite.
\end{thm}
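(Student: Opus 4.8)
The plan is to mirror the proof of Theorem \ref{thm1} essentially verbatim, but to supply the finiteness of the relevant central sections by means of Lemma \ref{lem1} instead of by finite generation of $G$ itself, and to run the argument as an induction on $i$. First note that one implication is free of any generation hypothesis: if $G/Z_i(G)$ is finite then $\gamma_{i+1}(G)$ is finite by Baer's theorem ($14.5.1$ of \cite{rob}). So the whole content lies in the forward direction, namely that $\gamma_{i+1}(G)$ finite forces $G/Z_i(G)$ finite, and it is here that $G/Z(G)=\langle x_1Z(G),\dots,x_nZ(G)\rangle$ being finitely generated will be used.

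For the base case $i=1$, I would argue directly. For each generator the conjugates $x_j^{\,b}=x_j[x_j,b]$ with $b\in G$ lie in the finite coset $x_j\gamma_2(G)$, so $C_G(x_j)$ has finite index in $G$. By Lemma \ref{lem1} we have $Z(G)=\bigcap_{j=1}^n C_G(x_j)$, an intersection of finitely many subgroups of finite index, hence itself of finite index; thus $G/Z(G)=G/Z_1(G)$ is finite.

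For the inductive step the computation is exactly that of Theorem \ref{thm1}. Since $\gamma_{i+1}(G)=[\gamma_i(G),G]$ is finite, for each $x_j$ the set $\{x_j^{\,b}:b\in\gamma_i(G)\}\subseteq x_j\gamma_{i+1}(G)$ is finite, so $C_{\gamma_i(G)}(x_j)$ has finite index in $\gamma_i(G)$. Lemma \ref{lem1} gives $\gamma_i(G)\cap Z(G)=\bigcap_{j=1}^n C_{\gamma_i(G)}(x_j)$, again a finite intersection of finite-index subgroups, so $\gamma_i(G)/(\gamma_i(G)\cap Z(G))\cong \gamma_i(G)Z(G)/Z(G)=\gamma_i(G/Z(G))$ is finite.

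Setting $\bar G=G/Z(G)$, I would then feed this into the inductive hypothesis applied to $\bar G$: since $\gamma_i(\bar G)=\gamma_{(i-1)+1}(\bar G)$ is finite, the case $i-1$ yields that $\bar G/Z_{i-1}(\bar G)$ is finite, and $\bar G/Z_{i-1}(\bar G)\cong G/Z_i(G)$ finishes the argument. I expect the one genuinely new point — and the main obstacle relative to Theorem \ref{thm1} — to be the verification that the hypothesis is inherited by $\bar G$, i.e. that the induction may legitimately be applied to it. In Theorem \ref{thm1} finite generation passes to every quotient automatically, whereas here one must check that $\bar G/Z(\bar G)$ is finitely generated; this follows from $\bar G/Z(\bar G)\cong G/Z_2(G)$ together with $Z(G)\leq Z_2(G)$, which exhibits $\bar G/Z(\bar G)$ as a quotient of the finitely generated group $G/Z(G)$, so that the hypothesis indeed survives down the induction.
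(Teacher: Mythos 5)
Your proposal is correct and follows essentially the same route as the paper: the same induction on $i$, the same use of Lemma \ref{lem1} to write $\gamma_i(G)\cap Z(G)$ as a finite intersection of finite-index centralizers $C_{\gamma_i(G)}(x_j)$, the same identification $(G/Z(G))/Z(G/Z(G))\cong G/Z_2(G)$ to see that the finite-generation hypothesis passes to $G/Z(G)$, and Baer's theorem (14.5.1 of \cite{rob}) for the reverse implication. The only difference is the base case $i=1$, which the paper settles by citing Niroomand's converse of Schur's theorem \cite{nir} while you reprove it directly from Lemma \ref{lem1} and Poincar\'e's theorem on finite intersections of finite-index subgroups; this makes your argument self-contained but is not a genuinely different approach.
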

\begin{proof} If $i=1$, then the main theorem of \cite{nir} implies the result.
Let  $i >1$ and $G/Z(G)=\langle x_1Z(G), \ldots, x_nZ(G)\rangle$. As the proof of theorem \ref{thm1},
$C_{\gamma_i(G)}(a)$ has finite index in $\gamma_i(G)$ for any $a\in G$.
Since $Z(G)=\displaystyle\bigcap_{i=1}^nC_G(x_i)$,
$\dfrac{\gamma_i(G)}{(\gamma_i(G)\bigcap
Z(G))}$ is finite. Hence $\dfrac{(\gamma_i(G)
Z(G))}{Z(G)}=\gamma_i({G}/{Z(G)})$ is finite. Now $\dfrac{G/Z(G)}{Z(G/Z(G))}=\dfrac{G}{Z_2(G)}$
is finitely generated, so by induction
$\dfrac{({G}/{Z(G)})}{Z_{i-1}({G}/{Z(G)})}$ is finite, and
then ${G}/{Z_i(G)}$ is finite. Now 14.5.1 of \cite{rob} completes the proof.
\end{proof}
%%********************************************************************************
The following example shows the finiteness conditions on the Theorem \ref{thm2} is necessary.
\begin{example}
Let $G$ be a group with generators $x_j, y_j, j>1$ and $z$, subject to the relations
$x_j^p=y_j^p=z^{p^i}=1, [x_l, x_j]=[y_l, y_j]=1$, for $k\neq j, [x_k, y_j]=1$, 
$[x_j, y_j]=z$ and
$[z, t_1,\ldots, t_r]=z^{p^{r}}$ where $t_s\in \{x_j, y_j\}$ for $s=1,\ldots, r$
and $1\leq  r\leq i-1$. Then $Z_i(G)=\langle z\rangle$ and
$\gamma_{i+1}(G)=\langle z^{p^{i-1}}\rangle$, but $G/Z_i(G)$is infinite.
\end{example}
%%**********************************************************************
%%**********************************************************************
% ----------------------------------------------------------------
%\bibliographystyle{amsplain}

%\bibliography{9}

% ----------------------------------------------------------------
%\bibliographystyle{amsplain}
%\bibliography{}
\end{document}